\documentclass[11pt]{article}

%% Math packages should be loaded before unicode-math
\usepackage{amsmath,amsthm}
\usepackage{mathtools}

%% TeX/XeTeX/LuaTeX specific font & encoding setup
\usepackage{ifxetex,ifluatex}
\ifxetex
\usepackage{xltxtra}
\else
\ifluatex
\usepackage{luatextra}
\else
%% (pdf)TeX
\usepackage{amssymb}
%%% Input encoding
\usepackage[utf8]{inputenc}
%%% Fonts
\usepackage[T1]{fontenc}
\usepackage{mathpazo,tgcursor,tgpagella}
\usepackage{textcomp}
%% Trick to skip XeTeX/LuaTeX code (from LaTeX3)
\csname else\expandafter\endcsname
\romannumeral -`0% no space till \relax
\fi
\fi
\iftrue\relax%
%% XeTeX/LuaTeX
\defaultfontfeatures{Ligatures=TeX}
\setmainfont{TeX Gyre Pagella}
\setmonofont{TeX Gyre Cursor}
\usepackage[math-style=TeX, vargreek-shape=TeX]{unicode-math}
\setmathfont{TG Pagella Math}
\AtBeginDocument{%
  \let\setminus\smallsetminus% no U+29F5 in TG Pagella Math?
}
\fi

\usepackage[english=american]{csquotes}
\usepackage[a4paper,margin=1in]{geometry}
\usepackage{xspace}
\usepackage{authblk}
\usepackage[authoryear]{natbib}
\usepackage[compact]{titlesec} % compresses space between section heads
\usepackage{enumitem}

\usepackage[bookmarksnumbered,bookmarksopen,
unicode]{hyperref}

\hypersetup{
  pdftitle={A short proof for the polyhedrality of the
    Chvátal–Gomory closure of a compact convex set},
  pdfauthor={Gábor Braun,
    Sebastian Pokutta},
  pdfkeywords={Chvátal–Gomory closure, polyhedrality, compactness,
    convex bodies},% should be same as keywords
  pdfsubject={MSC2010 Primary 90C25; Secondary 90C05, 90C10}}
% should be "MSC2000" (or 2010) + subject classification

%%%%%%%%%%%%%%%%%%%%% Sebastian's defs %%%%%%%%%%%%%%%%%%%%
\newcommand{\face}[1]{\left\{#1\right\}}
\newcommand{\gc}{CG\xspace}
\newcommand{\ffloor}[1]{\left\lfloor#1\right\rfloor}
\newcommand{\Z}{\mathbb{Z}}
\newcommand{\Q}{\mathbb{Q}}
\newcommand{\R}{\mathbb{R}}
\newcommand{\N}{\mathbb{N}}
\newcommand{\tnorm}[1]{\left\lVert#1\right\rVert}

%%%%%%%%%%%%%%%%%%%%% Gábor's defs %%%%%%%%%%%%%%%%%%%%%%%%
% Affine subspace and integral affine subspace
\newcommand*{\aff}[1]{\operatorname{aff}(#1)}

% Unit vector
\newcommand*{\unitvector}[1]{\widehat{#1}}

% Absolute value of the number #1
\newcommand*{\abs}[1]{\left|#1\right|}

%%%%%%%%%%%%%%%%%%%%%%%%%%%%%%%%%%%%%%%%%%%%%%%

\newtheorem{thm}{Theorem}
\newtheorem{lem}[thm]{Lemma}

\newtheorem{cor}[thm]{Corollary}

\theoremstyle{remark}
\newtheorem{rem}{Remark}

\title{A short proof for the polyhedrality of the \\
  Chvátal–Gomory closure of a compact convex set%
  \thanks{\emph{Keywords and phrases:}
    Chvátal–Gomory closure, polyhedrality, compactness,
    convex body} % space to separate footnote marks
  \thanks{\emph{2010 Mathematics Subject Classification:}
    Primary 90C25; Secondary 90C05, 90C10}}

\date{}
\author{Gábor Braun}
\author{Sebastian Pokutta}

\affil{ISyE,
  Georgia Institute of Technology,
  Atlanta, GA 30332, USA.
  \textit{Email:}~\{gabor.braun,~sebastian.pokutta\}@isye.gatech.edu}

\begin{document}

\maketitle

\begin{abstract}
Recently Schrijver’s open problem, whether the Chvátal-Gomory closure of an
irrational polytope is polyhedral was answered independently in the
seminal works of \cite{DDV2011} and \cite{ds2010}; the former even
applies to general compact convex sets. We present a very short, easily
accessible proof.
\end{abstract}

\section{Introduction}
\label{sec:introduction}
The Chvátal-Gomory procedure was the first cutting-plane procedure
introduced (in \cite{GY1,GY2,GY3,Chvatal}) and has been studied thoroughly from a
theoretical as well as a practical point of view (see e.g., \cite{CCH,CF1,BEHS,letchford2002strengthening,ES,fischetti2007optimizing,C,bonami2008projected}). 
Recall that the Chvátal-Gomory closure \(K'\) of a polyhedron or
a compact convex set \(K \subseteq \R^n\) is defined as
\[K' \coloneqq \bigcap_{\substack{(c,\delta) \in \Z^{n} \times \R, \\
K \subseteq \face{cx \leq \delta}}} \face{cx \leq
\ffloor{\delta}},\]
where we use \(\face{ax \leq b}\) as a shorthand for \(\face{x
  \mid ax \leq b}\); for brevity we refer to it as \gc closure and
to the defining inequalities as \gc cuts.
One of the fundamental
questions in cutting-plane theory is whether the closure arising from a
cutting-plane procedure (i.e., adding all potential cuts that can be
derived from valid inequalities) is a polyhedron. Clearly, we
add an infinite number of cuts here and thus it is not clear \emph{a
  priori} whether \(K'\) is a polyhedron. However, for the case where
\(K\) is a rational polyhedron it is well-known that the \gc closure is a rational
polyhedron again  (see \cite{schr1980},
\cite{Chvatal}). As a natural consequence, in \cite{schr1980} the question was raised
whether the \gc closure of an irrational
polytope \(P\) is a polytope. This important question was answered
in the affirmative independently in the works by 
\cite{ds2010} and \cite{DDV2011} (the latter established the even more
general case of arbitrarily compact convex
set). The relevance of this result is many-fold, from the convergence
of adding cutting planes of the \gc type to Mixed-Integer Nonlinear Programming over compact convex
sets to the theory of proof systems where we consider proofs of
assertions with infinitely many defining sentences.

\subsection*{Contribution}
\label{sec:contribution}
We provide a short proof of the more general result \cite{DDV2011}. In
contrast to \cite{DDV2011}, we do not rely so much on convex
analysis but take a rather direct topological approach while
maintaining the overall high-level strategy. Key is here a
strengthened, quantitative homogeneity lemma (see
Lemma~\ref{lem:homogeneity}) from which many required properties
follow immediately. Before, these had been established separately using
different strategies. We believe that the proposed proof
lends itself to potential applications to
many more classes of cutting planes.

The proof consists of three steps:

\begin{enumerate}

\item \emph{Continuity:} 
 Faces and
  implied cuts deform smoothly when
  perturbing the coefficients. This is crucial for the
  actual finiteness argument via compactness, and is valid for
  inequalities in general. See Lemma~\ref{lem:nearby-dir}.

\item \emph{Homogeneity} for a procedure \(M\):
 The cutting-plane closure \(M\) commutes with intersections with faces, i.e.,
 \[M(K \cap F) = M(K) \cap F,\]
 where \(F\) is a face-defining hyperplane of the convex body
 \(K\). See Lemma~\ref{lem:homogeneity}.

 Not only does homogeneity hold for many
 cutting-plane procedures \(M\) (such as the split closure, the
 Lovász-Schrijver closures, Sherali-Adams hierarchies, and the
 Lift-and-Project closure, see \cite{PS20091}) in the case of rational
 polyhedra, but
 homogeneity also allows a very clean, inductive approach
to polyhedrality. See Theorem~\ref{lem:boundary}.

Moreover, it is also homogeneity that ensures
that finitely many \gc cuts suffice to restrict the \gc closure
to a rational subspace of the affine space spanned by the convex set
(which is necessary for
polyhedrality here). See Corollary~\ref{cor:bringIntoRationalSpace}.
\item \emph{Locality:}  Informally,
  every point \(x\) in the relative interior of a
  \emph{polytope} \(P\) can be cut out by  a finite number of
  \gc cuts.

  This step is contained in the proof
  of Theorem~\ref{thm:cg-closure-polytope}.
\end{enumerate}

From all those properties the hardest one to establish and the
cornerstone of our proof is the quantitative version of the
\hyperref[lem:homogeneity]{homogeneity lemma}
in Section~\ref{sec:homogeneity}. 
In fact, for the proof we need a generalization
of a famous theorem due to Kronecker and Weyl provided in
Lemma~\ref{lem:vectorSpace}. Once homogeneity is established, the
conclusion of polyhedrality follows naturally in our
framework and it is actually very similar to the proof for rational
polyhedra given in \cite{schr1980}.

\section{Preliminaries}
\label{sec:preliminaries}
In the following, we only consider exposed faces of convex sets
and for the sake of
brevity we refer to them as \emph{faces}. 
In other words, a face \(F\) of a compact convex set \(K\)
is a subset of the form \(F = K \cap \face{\pi x = \pi_0}\)
for some supporting hyperplane \(\pi x = \pi_0\),
i.e., \(K \subseteq \face{\pi x \leq
\pi_0}\), and there exists \(x_0 \in K\) with \(\pi x_0 = \pi_0\);
we will call the face \(F\) the \emph{\(\pi\)-face of \(K\)}.
In particular, \(F = K\) is allowed if \(K\) is lower
dimensional. Recall that a compact convex set is uniquely
determined by its exposed proper faces
(i.e., exposed faces different from \(K\)).
All facts that we mention without pointers to the literature can be
found in \cite{schrijver1998theory} and \cite{barvinok2002course}.

To formalize continuity of directions,
we identify a direction with the unit vector pointing to that
direction,
therefore
for any non-zero vector \(\pi\),
let \(\unitvector{\pi} \coloneqq \pi \mathbin{/} \tnorm{\pi}\).
The next lemma shows that compact \(\pi\)-faces
change \textquote{upper semi-continuously} in the direction of
\(\pi\). In the following we denote the dimension of the ambient space
\(\R^n\) by \(n\) and we use the shorthand \([k] \coloneqq \face{1, \dots,k}\).
Let \(\partial K\) denote the relative boundary of
a closed convex set \(K\).

\begin{lem}[Continuity]
  \label{lem:nearby-dir}
  Let \(K\) be a closed convex set in \(\mathbb{R}^{n}\).
  Let \(\pi\) be a non-zero vector,
  and let the \(\pi\)-face \(F\) of \(K\) be compact.
  Then for every neighbourhood \(U\) of \(F\)
  there exists an \(\varepsilon > 0\) such that
  whenever
  \(\tnorm{\unitvector{\pi'} - \unitvector{\pi}}
  < \varepsilon\),
  the \(\pi'\)-face of \(K\) is a subset of \(U\),
  i.e.,
  all the maximizers of
  the function \(x \mapsto \pi' x\) on \(K\) lie in \(U\).
\begin{proof}
Without loss of generality,
we may assume that \(U\) is compact and convex.
By assumption, for some constant \(c\),
we have \(\pi x = c\) for all \(x \in F\),
and \(\pi x < c\) for all \(x \in K \setminus F\),
in particular for all \(x \in \partial U \cap K\).
As \(\partial U \cap K\) is compact, actually
\(\pi x < c_{1}\) for some \(c_{1} < c\)
and all \(x \in \partial U \cap K\).

As \(\pi' x\) is continuous in \(\pi'\) and \(x\),
and \(F\) and \(\partial U \cap K\)
are compact,
for all \(\pi'\) in a neighbourhood of \(\pi\),
we have \(\pi' x > c_{1}\) for all \(x \in F\),
but  \(\pi' x < c_{1}\) for all \(x \in \partial U \cap K\).
In particular,
all maximizers of \(x \mapsto \pi' x\) on \(K\) lie in \(U\).
This is obvious if \(K \subseteq U\).
If \(K \nsubseteq U\),
then \(\pi' x\) is everywhere smaller on \(K \setminus U\)
than on \(F\):
take arbitrary points \(x_{0} \in K \setminus U\)
and \(x_{1} \in F\).
There is an \(x_{2} \in \partial U \cap K\)
in the line segment \([x_{0}, x_{1}]\).
As \(\pi' x_{2} < \pi' x_{1}\),
we obtain \(\pi' x_{0} < \pi' x_{1}\).
This finishes the proof of the lemma.
\end{proof}
\end{lem}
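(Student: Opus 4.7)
The plan is to exploit the strict separation between $\pi x$ on $F$ and on the rest of $K$, preserve this separation under small perturbations of the direction by a uniform continuity argument on a suitable compact set, and then use a convex combination argument to confine all new maximizers to $U$.

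First, I would shrink $U$ to a compact convex neighbourhood of $F$; this loses nothing, since a containment in the smaller set implies containment in the original. The crucial compact set is not $K$ itself, which may be unbounded, but the slice $\partial U \cap K$, which is compact and disjoint from $F$ because $F$ lies in the relative interior of $U$.

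Next, let $c$ be the constant value of $\pi x$ on $F$. Since $F$ is precisely the set of $\pi$-maximizers on $K$, we have $\pi x < c$ on $K \setminus F$, and by compactness this strict inequality becomes a uniform gap $\pi x \leq c_1 < c$ on $\partial U \cap K$. By joint continuity of $(\pi', x) \mapsto \pi' x$ together with compactness of $F$ and of $\partial U \cap K$, whenever $\unitvector{\pi'}$ is sufficiently close to $\unitvector{\pi}$ one still has $\pi' x > c_1$ on $F$ and $\pi' x < c_1$ on $\partial U \cap K$.

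To conclude, I would argue that every maximizer of $\pi' x$ on $K$ lies in $U$. If some maximizer $x_0$ lay outside $U$, then any segment from $x_0$ to a point $x_1 \in F$ would meet $\partial U \cap K$ at a point $x_2$ by convexity of $K$ and connectedness of the segment. Linearity of $\pi'$ along the segment together with $\pi' x_2 < c_1 < \pi' x_1$ would force $\pi' x_0 < \pi' x_1$, contradicting the assumed maximality of $x_0$. The main (and rather mild) obstacle in this plan is that $K$ need not be bounded, which is precisely why the compact boundary slice $\partial U \cap K$ must play the role of the separator rather than $K \setminus U$ itself.
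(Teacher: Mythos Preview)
Your proposal is correct and follows essentially the same route as the paper: shrink $U$ to a compact convex neighbourhood, use compactness of $\partial U \cap K$ to get a uniform gap $c_1 < c$, preserve the gap under small perturbations of the direction by joint continuity on the compact sets $F$ and $\partial U \cap K$, and finish with the segment argument through $\partial U$. The only cosmetic difference is that the paper singles out the trivial case $K \subseteq U$ before the segment argument, whereas you fold it into the contradiction.
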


\begin{rem}
  If \(K\) is a polyhedron one can show more:
  the \(\pi'\)-face is contained in \(F\).
  In
  particular, there is no need for \(U\). To see this we choose \(U\)
  to be a polytope in the proof.
Then \(U \cap K\) is also a polytope,
and \(x \mapsto \pi' x\) is everywhere larger
on the vertices
of the \(\pi\)-face than on the other vertices,
when the direction of \(\pi'\) is close to that of \(\pi\).
Hence the \(\pi'\)-face is contained in the \(\pi\)-face,
as claimed.

\end{rem}

We will use a well-known approximation theorem due to Kronecker.
We state a version suitable for our needs,
which we derive from Weyl’s criterion.

\begin{lem}[{\citep{kronecker1884},\citep[Satz~3]{weyl1916}}]
\label{lem:vectorSpace}
  Let \(n, N_{0} \in \N\) and \(\pi \in \R^{n}\) with \(\pi \neq 0\).
  Then
  \(\Z^{n} + \pi \Z_{> N_{0}}\) contains a dense subset of
  a linear subspace \(V\) of \(\R^{n}\).
  In particular,  \(\Z^{n} + \pi \Z_{> N_{0}}\) contains points
  arbitrarily close to \(0\),
  i.e., for every \(\epsilon > 0\)
  there exists \(N > N_{0}\) and \(a \in \Z^{n}\)
  with \(\tnorm{a - N \pi} < \epsilon\).
\begin{proof}
When the components of \(\pi\) together with \(1\)
are linearly independent over \(\Q\),
this is a special case of Weyl’s criterion with \(V = \R^{n}\).
We reduce the general case to this one.

First we define \(V\).
Let \(\pi_{1}, \dots, \pi_{n}\) denote the components of \(\pi\).
We can assume without loss of generality that
a linear basis of \(1, \pi_{1}, \dots, \pi_{n}\) over \(\mathbb{Q}\)
is \(1, \pi_{1}, \pi_{2}, \dots, \pi_{k}\).

Thus for \(j > k\) there are integers \(n_{j,i}\) and \(n_{j}\)
together with a positive integer \(m\)
such that
\begin{align*}
  m
  \pi_{j} &= n_{j} + \sum_{i=1}^{k} n_{j,i} \pi_{i}, & j &> k.
  \intertext{We use these as
    the defining equations of \(V\),
    i.e., \(V\) is defined by}
  m
  x_{j} &= \sum_{i=1}^{k} n_{j,i} x_{i}, & j &> k.
  \intertext{Let \(e_{1}, \dots, e_{n}\) denote
    the canonical basis of \(\Z^{n}\).
    The following elements lie in \(V\):}
  \widetilde{e_{i}} &\coloneqq m e_{i} + \sum_{j=k+1}^{n} n_{j,i} e_{j},
  & i &< k, \\
  \widetilde{\pi} &\coloneqq m \pi - \sum_{j=k+1}^{n} n_{j} e_{j}.
\end{align*}
By Weyl’s criterion,
\(\Z^{k} + (\pi_{1}, \dots, \pi_{k}) \Z_{> N_{0}}\)
is dense in \({\mathbb{R}}^{k}\),
and hence also
\(m \Z^{k} + m (\pi_{1}, \dots, \pi_{k}) \Z_{> N_{0}}\)
is dense in \({\mathbb{R}}^{k}\).
We reformulate this for \(V\)
via the projection to the first \(k\) coordinates,
which is obviously an isomorphism
between \(V\) and \({\mathbb{R}}^{k}\):
a dense subset of \(V\) is
\(\sum_{i=1}^{k} \Z \widetilde{e_{i}}
+ \widetilde{\pi} \Z_{> N_{0}}\),
which is a subset of \(\Z^{n} + \pi \Z_{> N_{0}}\).
This finishes the proof.
(For \(k = 0\) the argument above is overkill,
as \(\pi\) is rational and hence
\(\Z^{n} + \pi \Z_{> N_{0}}\) contains \(0\).)
\end{proof}
\end{lem}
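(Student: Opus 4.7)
The plan is to reduce to the classical Kronecker–Weyl equidistribution theorem, which says that if $1, \pi_1, \dots, \pi_n$ are linearly independent over $\Q$, then $\{N\pi \bmod \Z^n : N \in \N\}$ is equidistributed, hence dense, in $[0,1)^n$; this already gives the conclusion with $V = \R^n$, and the restriction $N > N_0$ is automatic since equidistribution survives restricting to any cofinite set of indices. The real work is to locate the correct subspace $V$ when $1, \pi_1, \dots, \pi_n$ have nontrivial $\Q$-linear dependencies.

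First I would fix a $\Q$-basis of $\Q + \sum_i \Q \pi_i$ of the form $1, \pi_1, \dots, \pi_k$, reordering coordinates if necessary. For each $j > k$, write $\pi_j$ as a $\Q$-linear combination of this basis and clear a common denominator $m \in \Z_{>0}$ to obtain integer relations $m \pi_j = n_j + \sum_{i \leq k} n_{j,i} \pi_i$. The homogeneous versions $m x_j = \sum_{i \leq k} n_{j,i} x_i$, viewed as linear equations in $x \in \R^n$, cut out the candidate subspace $V$.

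Next I would produce explicit elements of $V$ that live in $\Z^n + \pi \Z_{>N_0}$ and generate a dense subset. The vectors $\widetilde{e_i} \coloneqq m e_i + \sum_{j > k} n_{j,i} e_j \in \Z^n$ lie in $V$ by construction, and $\widetilde{\pi} \coloneqq m \pi - \sum_{j>k} n_j e_j$ lies in $V$ by the defining relations while differing from $m \pi$ by an integer vector. The projection $V \to \R^k$ onto the first $k$ coordinates is a linear isomorphism sending $\widetilde{e_i} \mapsto m e_i$ and $\widetilde{\pi} \mapsto m(\pi_1, \dots, \pi_k)$. Applying Weyl to the rationally independent tuple $(\pi_1, \dots, \pi_k)$, the set $m \Z^k + m(\pi_1, \dots, \pi_k)\, \Z_{>N_0}$ is dense in $\R^k$; pulling back through the isomorphism gives a dense subset of $V$ contained in $\sum_i \Z \widetilde{e_i} + \widetilde{\pi}\, \Z_{>N_0} \subseteq \Z^n + \pi \Z_{>N_0}$, as required. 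The edge case $k = 0$ means $\pi$ is already rational, so some $N \pi$ with $N > N_0$ lies in $\Z^n$ and the conclusion is immediate.

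The main obstacle is really the Kronecker–Weyl input itself, which I would quote rather than reprove; the remainder is a bookkeeping exercise in linear algebra over $\Q$. A mild subtlety is making sure the integer translations used to push $N \pi$ into $V$ are absorbed into the $\Z^n$ summand without disturbing the constraint $N > N_0$, but this is built into the choice of $\widetilde{\pi}$ and $\widetilde{e_i}$, since only the multiplier $N$ on $\widetilde{\pi}$ (and thus on $\pi$) is varied while the $\Z$-combinations of the $\widetilde{e_i}$ are freely adjustable.
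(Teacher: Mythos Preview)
Your proposal is correct and follows essentially the same argument as the paper, down to the choice of notation: the paper also selects a $\Q$-basis $1,\pi_1,\dots,\pi_k$, defines $V$ by the homogeneous relations $m x_j = \sum_{i\le k} n_{j,i} x_i$, introduces the same lifts $\widetilde{e_i}$ and $\widetilde{\pi}$, and transfers Weyl's density for $m\Z^k + m(\pi_1,\dots,\pi_k)\Z_{>N_0}$ back to $V$ via the first-$k$-coordinates projection. The edge case $k=0$ is handled identically.
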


\section{Homogeneity}
\label{sec:homogeneity}
In this section we compare \(K'\) with the \gc closure \(F'\)
of a face \(F\).

\begin{lem}[Homogeneity for compact faces]
  \label{lem:homogeneity}
  Let \(K \subseteq \R^n\) be a closed convex set and let
  \[
  F \coloneqq K \cap \face{ \pi x = \pi_0}
  \]
  be a compact \(\pi\)-face of \(K\)
  for some \(\pi \in \R^{n}\) and \(\pi_0 \in \R\)
  with \(K \subseteq \face{\pi x \leq \pi_0 }\). 
  Further, assume that \(F\) satisfies \(c x \leq \delta\) with \(c \in
  \Z^{n}\) (and hence \(F'\) satisfies \(cx \leq \ffloor{\delta}\)).
  Then there are finitely many \gc cuts of \(K\)
  defining a polyhedron \(P\) satisfying
  \((c + \alpha \pi) x \leq \ffloor{\delta} + \alpha \pi_{0}\)
  for some \(\alpha > 0\).
\begin{proof}
Rescaling \((\pi_0,\pi)\) we may assume, without loss of generality, that \(\pi_{0} \in \Z\).
By increasing \(\delta\) a little if necessary,
we may assume that \(\delta\) is not an integer.
Note that a small increase of \(\delta\)
does not change \(\ffloor{\delta}\).
Let \(0 < \varepsilon < \delta - \ffloor{\delta}\) be
a small positive number.
Choose a small compact neighbourhood \(U\) of \(F\)
such that \(c x \leq \delta + \varepsilon\) for \(x \in U\).

There is clearly an \(\varepsilon_{1} > 0\)
such that for all \(y \in \R^{n}\)
with \(\tnorm{y} < \varepsilon_{1}\),
we have
\(\abs{y x} < \varepsilon\) for all \(x \in U\).
By Lemma~\ref{lem:nearby-dir},
there exists an \(\varepsilon_{2} > 0\) such that
whenever
\(\tnorm{\unitvector{\pi'} - \unitvector{\pi}}
< \varepsilon_{2}\),
all maximizers of the function \(x \mapsto \pi' x\) on \(K\)
lie in \(U\).
There is a large positive integer \(N\)
such that for all positive integer \(m \geq N\)
and \(a \in \Z^{n}\)
with \(\tnorm{a - m \pi} < \varepsilon_{1}\)
we have
\(\tnorm{\unitvector{c+a} - \unitvector{\pi}} = \tnorm{\unitvector{\frac{c+a}{m}} - \unitvector{\pi}}
< \varepsilon_{2}\).
In particular,
all maximizers of \((c + a) x\) on \(K\) lie in \(U\)
for all such \(m\) and \(a\).
All in all,
for all positive integer \(m \geq N\)
and \(a \in \Z^{n}\)
with \(\tnorm{a - m \pi} < \varepsilon_{1}\),
all maximizers of the function \(x \mapsto (c + a) x\)
on \(K\) lie in \(U\),
and we have \(\tnorm{(a - m \pi) x} < \varepsilon\)
for all \(x \in U\).

Now we choose a finite collection of such pairs \((m, a)\).
By Lemma~\ref{lem:vectorSpace}
the collection \(\Z^{n} - \Z_{{} \geq N} \pi\)
contains a dense subset of
a linear subspace \(V\) of \(\R^{n}\).
Let \(v_{1}, \dotsc, v_{k}\) be the vertices of a small simplex in \(V\)
containing \(0\) in its relative interior,
with \(\tnorm{v_{i}} < \varepsilon_{1}\) for all \(i\).
We choose the simplex to be full dimensional in \(V\),
i.e., here \(k-1\) is the dimension of \(V\).
By slightly perturbing the \(v_{i}\) in \(V\),
the vertices remain in the \(\varepsilon_{1}\)-ball,
and \(0\) in the interior of the simplex.
As \(\Z^{n} - \Z_{{} \geq N} \pi\) contain a dense subset of \(V\),
by a slight perturbation we can even move the vertices
inside \(\Z^{n} - \Z_{{} \geq N} \pi\),
obtaining a new simplex with vertices \(a_{i} - m_{i} \pi\),
with \(0\) still contained
in the relative interior of the new simplex:
i.e., there exist coefficients \(\lambda_{i}\) satisfying
\begin{equation}
  \label{eq:5}
  \sum_{i \in [k]} \lambda_{i} (a_{i} - m_{i} \pi) = 0,
\qquad \text{where }\lambda_{1},\dots,\lambda_k > 0 \text{ and } \sum_{i \in [k]}\lambda_i = 1
\end{equation}
with some \(a_{i} \in \Z^{n}, \ m_i \in \N,\ m_{i} \geq N\)
satisfying \(
  \tnorm{a_{i} - m_{i} \pi} < \varepsilon_{1}\).

As a consequence, for all \(x \in U \cap K\), one has
\begin{equation}
  \label{eq:2}
  (c + a_{i}) x = c x + m_{i} \pi x + (a_{i} - m_{i} \pi) x
  \leq (\delta + \varepsilon) + m_{i} \pi_{0} + \varepsilon,
\end{equation}
which is also valid for all \(x \in K\) as \((c + a_{i})x\)
attains its maximum in \(U\).
Hence
\begin{equation*}
  (c + a_{i}) x \leq \ffloor{\delta + m_{i} \pi_{0} + 2 \varepsilon}
  = \ffloor{\delta} + m_{i} \pi_{0}
\end{equation*}
is a \gc cut for \(K\) for \(i \in [k]\).

Let \(P\) be the polyhedron defined by these \gc cuts.
The convex combination of the \gc cuts
with coefficients \(\lambda_{i}\)
is valid for \(P\),
which is exactly the claimed inequality 
\begin{equation*}
  (c + \alpha \pi) x \leq \ffloor{\delta} + \alpha \pi_{0},
\end{equation*}
for \(P\) with \(\alpha \coloneqq \sum_{i \in [k]} \lambda_{i} m_{i} > 0\) as
\eqref{eq:5} can be rewritten to
\begin{equation*}
  \sum_{i \in [k]} \lambda_{i} (c + a_{i}) =
  c + \underbrace{\sum_{i \in [k]} \lambda_{i}  m_{i}}_{\alpha} \pi.
  \qedhere
\end{equation*}
\end{proof}
\end{lem}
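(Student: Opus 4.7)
The plan is to build finitely many \gc cuts of \(K\) whose positive convex combination produces the claimed inequality. Since \(F\) satisfies \(cx \leq \delta\) and hence \(F'\) satisfies \(cx \leq \ffloor{\delta}\), I want to ``lift'' this bound to \(K\) by tilting \(c\) in the direction of \(\pi\). Concretely, I seek pairs \((a,m)\) with \(a \in \Z^{n}\), \(m \in \N\), such that \(a\) is close to \(m\pi\). Then the direction \(\unitvector{c+a}\) is close to \(\unitvector{\pi}\), and by Lemma~\ref{lem:nearby-dir} the \((c+a)\)-face of \(K\) is forced into any prescribed neighbourhood of \(F\), where \(cx\) is nearly \(\delta\); this will let me round the inequality down to a genuine \gc cut.

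First I would normalize: rescale so that \(\pi_{0} \in \Z\), and increase \(\delta\) by an arbitrarily tiny amount so that \(\delta \notin \Z\) without changing \(\ffloor{\delta}\). Fix a small \(\varepsilon \in (0,\delta - \ffloor{\delta})\) and a compact neighbourhood \(U\) of \(F\) with \(cx \leq \delta + \varepsilon\) on \(U\), then pick \(\varepsilon_{1} > 0\) so that \(\tnorm{y} < \varepsilon_{1}\) implies \(\abs{yx} < \varepsilon\) for every \(x \in U\). Lemma~\ref{lem:nearby-dir} yields an \(\varepsilon_{2}\) such that any direction within \(\varepsilon_{2}\) of \(\unitvector{\pi}\) has its maximizers on \(K\) inside \(U\), and then a large \(N\) ensures that whenever \(m \geq N\) and \(\tnorm{a - m\pi} < \varepsilon_{1}\), the direction of \((c+a)/m\) lies within \(\varepsilon_{2}\) of \(\unitvector{\pi}\). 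For such \((m,a)\) the decomposition
\[
(c+a)x = cx + m\pi x + (a - m\pi)x
\]
bounds \((c+a)x \leq \delta + \varepsilon + m\pi_{0} + \varepsilon < \ffloor{\delta} + m\pi_{0} + 1\) on \(U\), and hence on all of \(K\) since the maximum is attained inside \(U\). Because \(c+a \in \Z^{n}\) and \(\ffloor{\delta} + m\pi_{0} \in \Z\), this rounds to the honest \gc cut \((c+a)x \leq \ffloor{\delta} + m\pi_{0}\) for \(K\).

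Second I would assemble finitely many such cuts into the target inequality. By Lemma~\ref{lem:vectorSpace} the set \(\Z^{n} - \pi\Z_{> N}\) is dense in some linear subspace \(V \subseteq \R^{n}\). I choose a full-dimensional simplex in \(V\) containing \(0\) in its relative interior, with all vertices of norm \(< \varepsilon_{1}\); a small perturbation moves its vertices inside the dense set while preserving these properties, giving vertices \(a_{i} - m_{i}\pi\) with \(a_{i} \in \Z^{n}\), \(m_{i} \geq N\), and barycentric coefficients \(\lambda_{i} > 0\) summing to \(1\) with \(\sum_{i} \lambda_{i}(a_{i} - m_{i}\pi) = 0\). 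Taking the convex combination of the corresponding \gc cuts \((c + a_{i})x \leq \ffloor{\delta} + m_{i}\pi_{0}\) then produces \((c + \alpha\pi)x \leq \ffloor{\delta} + \alpha\pi_{0}\) with \(\alpha \coloneqq \sum_{i} \lambda_{i} m_{i} > 0\), valid on the polyhedron \(P\) cut out by these finitely many cuts.

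The main difficulty is coordinating three smallness requirements simultaneously: \(\tnorm{a_{i} - m_{i}\pi}\) must be small enough that (i) the continuity lemma places the \((c+a_{i})\)-face inside \(U\); (ii) the error \((a_{i} - m_{i}\pi)x\) stays below \(\varepsilon\) on \(U\) so that rounding succeeds; and (iii) enough such vectors exist to surround \(0\) in \(V\) by a positive convex combination. Condition~(iii) is exactly the content of Lemma~\ref{lem:vectorSpace}; the classical Kronecker--Weyl statement would not suffice when the components of \(\pi\) are \(\Q\)-linearly dependent, since then \(V\) is a proper subspace and the requisite approximations must live inside it.
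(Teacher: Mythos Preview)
Your proposal is correct and follows essentially the same argument as the paper: the same normalizations (\(\pi_{0}\in\Z\), \(\delta\notin\Z\)), the same chain of parameters \(\varepsilon,\varepsilon_{1},\varepsilon_{2},N\) linking Lemma~\ref{lem:nearby-dir} to the rounding step, and the same use of Lemma~\ref{lem:vectorSpace} to perturb a small simplex in \(V\) so that a positive convex combination of the resulting \gc cuts yields \((c+\alpha\pi)x\le\ffloor{\delta}+\alpha\pi_{0}\). The only cosmetic difference is phrasing (e.g., you say \(\Z^{n}-\pi\Z_{>N}\) is ``dense in'' \(V\) where the paper says it ``contains a dense subset of'' \(V\)); the content and logic are the same.
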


Apart from establishing the basis for the later induction, the main
advantage of Lemma~\ref{lem:homogeneity} is that many
important properties of the \gc closure follow as simple corollaries.

\begin{cor}
  \label{cor:face}
  Let \(K \subseteq \R^n\) be a compact convex set.
  Then \(K' \subseteq K\) and we have \(K' \cap F = F'\) for every face \(F\) of \(K\).
\begin{proof}
Applying Lemma~\ref{lem:homogeneity} to \(c = 0\) and \(\delta =0\),
we obtain that \(K'\) satisfies
every inequality \(\pi x \leq \pi_{0}\) satisfied by \(K\).

For a face \(F\), Lemma~\ref{lem:homogeneity} implies that \(K' \cap F\)
satisfies the \gc cuts defining \(F'\),
hence \(K' \cap F \subseteq F'\).
The inclusion in the other direction \(F' \subseteq K' \cap F\)
is obvious.
\end{proof}
\end{cor}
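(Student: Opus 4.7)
The plan is to derive both parts of the corollary as direct applications of the Homogeneity Lemma, which does all the real work.

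For the containment $K' \subseteq K$, I would argue that since $K$ is compact, every valid inequality $\pi x \leq \pi_{0}$ of $K$ (with $\pi \neq 0$) is actually attained, so $F \coloneqq K \cap \face{\pi x = \pi_{0}}$ is a nonempty compact $\pi$-face. Now apply Lemma~\ref{lem:homogeneity} with the trivial pair $c = 0 \in \Z^{n}$ and $\delta = 0$; every face satisfies $0 \cdot x \leq 0$. The lemma produces a polyhedron $P$ cut out by finitely many \gc cuts of $K$ and satisfying $(0 + \alpha\pi)x \leq \ffloor{0} + \alpha\pi_{0} = \alpha\pi_{0}$ for some $\alpha > 0$. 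Dividing by $\alpha$ gives $\pi x \leq \pi_{0}$, and since $K' \subseteq P$, the inequality $\pi x \leq \pi_{0}$ holds on $K'$. Since a compact convex set is the intersection of its supporting halfspaces, this yields $K' \subseteq K$.

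For the identity $K' \cap F = F'$, the inclusion $F' \subseteq K' \cap F$ is immediate: any \gc cut of $K$ is valid on $F \subseteq K$, so $F' \subseteq K'$, and $F' \subseteq F$ by definition. For the reverse inclusion, let $F = K \cap \face{\pi x = \pi_{0}}$ and take an arbitrary \gc cut $cx \leq \ffloor{\delta}$ of $F$, coming from a valid inequality $cx \leq \delta$ on $F$ with $c \in \Z^{n}$. Lemma~\ref{lem:homogeneity} supplies a polyhedron $P$ defined by finitely many \gc cuts of $K$ with
\begin{equation*}
(c + \alpha\pi)x \leq \ffloor{\delta} + \alpha \pi_{0}
\end{equation*}
for some $\alpha > 0$. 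Since $K' \subseteq P$, this inequality is valid on $K'$. Restricting to $F$, where $\pi x = \pi_{0}$, the terms $\alpha \pi_{0}$ on both sides cancel and we obtain $cx \leq \ffloor{\delta}$ on $K' \cap F$. As this holds for every \gc cut defining $F'$, we conclude $K' \cap F \subseteq F'$.

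There is no real obstacle here beyond correctly invoking Lemma~\ref{lem:homogeneity}; the only subtlety is recognizing the right choice of $(c,\delta)$ for the first part (namely $(0,0)$), which converts a valid inequality of $K$ into a convex combination of \gc cuts via the homogeneity mechanism. Once this is observed, both statements follow in a line, which is precisely the advertised payoff of stating homogeneity in quantitative form.
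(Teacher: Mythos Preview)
Your argument is correct and matches the paper's proof essentially line for line: both parts are obtained by invoking Lemma~\ref{lem:homogeneity}, with the choice $(c,\delta)=(0,0)$ for $K'\subseteq K$ and a generic \gc cut of $F$ for the inclusion $K'\cap F\subseteq F'$. One small quibble: $F'\subseteq F$ is not ``by definition'' but rather follows from the first part of the corollary applied to the compact convex set $F$; the paper simply calls this direction ``obvious,'' which is fair once $K'\subseteq K$ is in hand.
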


\begin{rem}
If one merely
wants to establish \(K' \cap F \subseteq (K \cap F)'\) and one is not interested in
the finiteness statement of Lemma~\ref{lem:homogeneity}
then it suffices to consider a single vector \(c + a_1\)
in the proof of Lemma~\ref{lem:homogeneity}
instead of a finite family.
From \eqref{eq:2} the proof can then be concluded as
follows: For every \(x \in K' \cap F\) 
\begin{equation*}
  c x = (c + a_1) x + (m_1 \pi - a_1) x - m_1 \pi_{0}
  \leq \ffloor{\delta} + \varepsilon
\end{equation*}
for every \(\varepsilon > 0\) small enough.
Thus \(c x \leq \ffloor{\delta}\) is valid for \(K' \cap F\).

Also note that \(K' \subseteq K\) alternatively follows with \cite[Lemma
2]{DeyP2010} (see \cite{Dadush:de:vi:10} for a similar result).
\end{rem}

We further obtain

\begin{cor}
  \label{cor:bringIntoRationalSpace}
  Let \(K\) be a compact convex set.
  Then finitely many \gc cuts of \(K\) define
  a polyhedron in a rational affine subspace \(V\)
  with \(V \subseteq \aff{K}\).
\begin{proof}
The affine subspace \(\aff{K}\) is defined by
finitely many inequalities \(a_i x \leq  b_i\)
with \(i \in [\ell]\) for some
\(\ell \in \N\).
These are consequences of finitely many \gc cuts
via Lemma~\ref{lem:homogeneity} with \(\pi = a_i\), \(\pi_0 = b_i\)
and \(c = 0\), \(\delta = 0\).
Therefore the polyhedron defined by these \gc cuts spans
a rational affine subspace \(V\) of \(\aff{K}\).
\end{proof}
\end{cor}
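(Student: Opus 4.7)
The plan is to reduce the statement to Lemma~\ref{lem:homogeneity} applied to each defining inequality of $\aff{K}$. First I write $\aff{K}$ as a finite intersection of closed half-spaces: expanding each defining equality $a_j x = b_j$ into the pair $a_j x \leq b_j$ and $-a_j x \leq -b_j$ yields a list $\face{a_i x \leq b_i}_{i \in [\ell]}$ whose intersection is exactly $\aff{K}$. Since $K \subseteq \aff{K}$, every $x \in K$ satisfies each $a_i x = b_i$ with equality; consequently $K \subseteq \face{a_i x \leq b_i}$ and the $a_i$-face of $K$ is all of $K$, so it is compact.

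Next, for each $i \in [\ell]$ I apply Lemma~\ref{lem:homogeneity} with the trivial cut data $c = 0 \in \Z^{n}$, $\delta = 0$ and face data $\pi = a_i$, $\pi_0 = b_i$, $F = K$. All of the hypotheses hold: $F$ is a compact $\pi$-face of $K$, and $F$ trivially satisfies $0 \cdot x \leq 0$. The lemma produces finitely many \gc cuts of $K$ whose defining polyhedron satisfies $\alpha_i a_i x \leq \alpha_i b_i$ for some $\alpha_i > 0$, equivalently $a_i x \leq b_i$. Let $P$ be the polyhedron cut out by the union of all these \gc cuts, over $i \in [\ell]$. Each \gc cut has an integer normal vector and an integer right-hand side, so $P$ is a rational polyhedron; and by construction $P \subseteq \bigcap_{i \in [\ell]} \face{a_i x \leq b_i} = \aff{K}$. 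Setting $V \coloneqq \aff{P}$ yields a rational affine subspace with $V \subseteq \aff{K}$, which is the claim.

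All the difficulty is absorbed into Lemma~\ref{lem:homogeneity}, which I invoke as given; no additional argument is required. The only conceptual point worth emphasizing is that although the defining coefficients $a_i, b_i$ of $\aff{K}$ may themselves be irrational, the inequality $a_i x \leq b_i$ is realized on $P$ only as a \emph{convex combination} of \gc cuts, each with integer data. That is precisely why $P$ can simultaneously be a rational polyhedron and satisfy $P \subseteq \aff{K}$.
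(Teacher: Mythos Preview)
Your proof is correct and follows essentially the same approach as the paper: write \(\aff{K}\) by finitely many linear inequalities, apply Lemma~\ref{lem:homogeneity} with \(c=0\), \(\delta=0\) to each, and observe that the resulting polyhedron of \gc cuts has integer data and hence spans a rational affine subspace of \(\aff{K}\). Your version is simply more explicit about why the \(a_i\)-face equals \(K\) and why the resulting polyhedron is rational.
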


\section{The \gc closure of a compact convex set}
\label{sec:gc-closure-compact}

We will now prove the main theorem:
\begin{thm}
  \label{thm:cg-closure-polytope}
  Let \(K\) be a compact convex set.
  Then \(K'\) is a rational polytope
  defined by finitely many \gc cuts of \(K\).
\end{thm}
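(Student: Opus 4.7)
The plan is to proceed by induction on $d = \dim K$, with the trivial base case $d = 0$ ($K$ is a point, and $K'$ is either that point or $\emptyset$). For the inductive step, assume the theorem for every compact convex set of dimension strictly less than $d$.

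For each proper face $F$ of $K$ we have $\dim F < d$, so the inductive hypothesis makes $F'$ a rational polytope defined by finitely many CG cuts of $F$. Corollary~\ref{cor:face} identifies $F' = K' \cap F$, and Lemma~\ref{lem:homogeneity} converts each defining CG cut of $F'$ into a finite family of CG cuts of $K$ whose combined effect, restricted to $F$, yields that cut. The task is then to combine these face-by-face finite families into one finite family of CG cuts of $K$ whose intersection is exactly $K'$.

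I would handle the relative boundary of $K$ via a compactness/continuity argument and then close with a locality step for the interior. For the boundary: cover the compact unit sphere of directions in $\R^n$ by finitely many small open neighborhoods around representative directions $\pi_1, \dots, \pi_r$, so that by Lemma~\ref{lem:nearby-dir} each $\pi'$-face of $K$ with $\pi'$ in the $j$-th neighborhood lies in an arbitrarily small neighborhood of $F_{\pi_j}$. Union the finite families of CG cuts lifted from the $F_{\pi_j}'$ via Lemma~\ref{lem:homogeneity}, together with the finite family supplied by Corollary~\ref{cor:bringIntoRationalSpace} forcing $K'$ into a rational affine subspace. The result is a rational polytope $Q \supseteq K'$ which coincides with $K'$ on the relative boundary of $K$. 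For the interior, the set $Q \setminus K'$ lies in $\operatorname{relint}(K)$, each of its points is strictly separated from $K'$ by some CG cut of $K$, and an open-cover argument on the compact $Q$ produces finitely many additional CG cuts sufficient to cut $Q$ down to $K'$.

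\textbf{Main obstacle.} The crux is the boundary covering step: I must argue \emph{uniformly} that the CG cuts lifted from a single representative face $F_{\pi_j}$ via Lemma~\ref{lem:homogeneity} actually control $K'$ on all nearby faces, despite the combinatorial variation of faces with $\pi'$ and the discreteness of $\Z^n$ underlying CG cuts. An alternative route to the same finiteness is a contradiction/accumulation argument: assuming infinitely many essential facet-defining CG cuts of $K'$, extract a limit direction $\hat c$ of their normalized normals, invoke the inductive polytopality of $F_{\hat c}'$, and derive a contradiction by comparing the infinitely many cuts with the finitely many lifts of cuts of $F_{\hat c}'$. Either route relies on a careful quantitative interplay between Lemma~\ref{lem:nearby-dir} and Lemma~\ref{lem:homogeneity}, with the floor function calibrating the $\varepsilon$-neighborhoods—which is why Lemma~\ref{lem:homogeneity} was stated in its quantitative form. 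The locality step is also subtle since $Q \setminus K'$ is open rather than compact, so one must invoke compactness on a slightly shrunk region.
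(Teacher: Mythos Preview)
Your boundary step is essentially the paper's Lemma~\ref{lem:boundary}: cover the unit sphere of directions by finitely many neighborhoods, lift finitely many CG cuts from each representative face via Lemma~\ref{lem:homogeneity}, and adjoin the cuts from Corollary~\ref{cor:bringIntoRationalSpace} to force the result into a rational affine subspace of $\aff{K}$. This indeed produces a polytope $P$ with $K' \subseteq P \subseteq K$ and $P \cap \partial K = K' \cap \partial K$.

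The genuine gap is your locality step. An open-cover argument on the compact $Q$ does not do what you want: any open cover of $Q$ must include neighborhoods of points of $K'$, and those neighborhoods do not correspond to CG cuts, so a finite subcover does not hand you a finite family of cuts reducing $Q$ to $K'$. Covering only $Q \setminus K'$ fails because, as you note, that set is not compact. Passing to a compact $C \subset Q \setminus K'$ lets you cut out $C$ with finitely many cuts, but the resulting polytope may still contain points of $(Q \setminus K') \setminus C$ near $\partial K'$, and iterating this gives no termination argument. The paper's actual interior argument is not topological but \emph{arithmetic}: since $P$ lies in a rational affine subspace $V$, the orthogonal projection $D$ of $\Z^n$ onto the linear part of $V$ is a lattice; any vertex $v$ of $P$ outside $K'$ lies in $\operatorname{relint}(K)$, so a small ball in $V$ around $v$ of radius $r>0$ is contained in $K$, and hence a CG cut with normal $c$ can remove $v$ only if the projection $d \in D$ of $c$ satisfies $\tnorm{d} < 1/r$. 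Discreteness of $D$ leaves only finitely many candidate $d$, and for each fixed $d$ a further discreteness argument on the right-hand sides shows there is a single deepest cut. This lattice bound on the relevant normals is the missing idea; a pure compactness argument cannot replace it.
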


The proof will proceed via induction on the dimension of \(K\) using
the following step lemma. % pun intended ;-)

\begin{lem}
  \label{lem:boundary}
  Let \(K\) be a compact convex set.
  Let us assume that for every proper face \(F\) of \(K\),
  the \gc closure \(F'\) is defined by finitely many \gc cuts of \(F\)
  (i.e., Theorem~\ref{thm:cg-closure-polytope} holds for \(F\)).
  Then
  there is a polytope \(P\) defined by finitely many \gc cuts of \(K\),
  which is contained in \(K\)
  and \(P \cap \partial K = K' \cap \partial K\).
\begin{proof}
For all unit vectors \(\pi\) in the lineality space of \(\aff{K}\),
let \(\pi x \leq \pi_{0}\) define
the associated supporting hyperplane of \(K\).
Now \(F_{\pi} = K \cap \face{ \pi x = \pi_0}\)
is a proper face, the \(\pi\)-face of \(K\),
and hence \(F_{\pi}'\) is defined
by finitely many \gc cuts of \(F_{\pi}\) by our assumption.
By Lemma~\ref{lem:homogeneity} there are finitely many \gc cuts
of \(K\) defining a polyhedron \(P_{\pi}\)
satisfying \(\pi x \leq \pi_{0}\)
and \(P_{\pi} \cap \face{ \pi x = \pi_0} = F_{\pi}'\).
For \(F_{\pi}' \neq \emptyset\) this means exactly
that the \(\pi\)-face of \(P_{\pi}\) is \(F_{\pi}'\).
By adding finitely many \gc cuts,
we may assume that \(P_{\pi}\) is a polytope.

We claim that
for vectors \(\pi'\) in a neighbourhood \(U_{\pi}\) of \(\pi\),
the polytope \(P_{\pi}\) still satisfies
\(\pi' x \leq \pi'_{0}\)
and \(P_{\pi} \cap \face{ \pi' x = \pi'_0} = F_{\pi'}'\).
This is immediate
if \(F_{\pi}' = \emptyset\),
as \(F_{\pi'}\) is disjoint from \(P_{\pi}\)
by Lemma~\ref{lem:nearby-dir}.
If \(F_{\pi}' \neq \emptyset\),
let \(F_{P_{\pi}, \pi'}\) denote the \(\pi'\)-face of \(P_{\pi}\).
The inequality \(\pi' x \leq \pi'_{0}\)
is satisfied by \(P_{\pi}\) as \(F_{P_{\pi}, \pi'}\)
is also the \(\pi'\)-face of \(F'_{\pi}\).
If \(F_{P_{\pi}, \pi'}\) is not contained
in the \(\pi'\)-face \(F_{\pi'}\) of \(K\),
then \(P_{\pi}\) satisfies \(\pi' x < \pi'_{0}\) and
\(F_{\pi'}' = \emptyset\).
However,
if \(F_{P_{\pi}, \pi'}\) is contained in the \(\pi'\)-face \(F_{\pi'}\),
then clearly
\(F_{\pi'}' = F_{P_{\pi}, \pi'} = P_{\pi} \cap \face{ \pi' x = \pi'_{0}}\),
proving the claim.

We obtain an open cover of the unit
sphere of \(\aff{K}\) with neighborhoods \(U_\pi\)
such that for each \(\pi' \in U_\pi\)
we have
\(P_{\pi} \cap \face{ \pi' x = \pi'_0} = F_{\pi'}'\)
and \(\pi' x \leq \pi'_{0}\) for all \(x \in P_{\pi}\).
Since the unit sphere is compact,
it follows by choosing a finite subcover that
finitely many \gc cuts define a polytope \(Q\)
with
\(Q \cap \face{ \pi x = \pi_0} = F_{\pi}'\)
and \(\pi x \leq \pi_{0}\) for all \(x \in Q\).

By Corollary~\ref{cor:bringIntoRationalSpace},
by adding finitely many cuts
we obtain a polytope \(P\) in a rational affine subspace of \(\aff{K}\),
which is contained in \(Q\).
In particular, it lies in \(K\) and
\(P \cap \partial K = K' \cap \partial K\).
\end{proof}
\end{lem}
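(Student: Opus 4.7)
The plan is to parametrize the boundary of $K$ by the sphere of outward normal directions, use the inductive hypothesis together with Lemma~\ref{lem:homogeneity} to build, for each direction, a polytope of \gc cuts of $K$ that realizes $F'_\pi$ on the corresponding face, and then extract a finite family by a compactness argument on the sphere.

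First, for each unit vector $\pi$ in the lineality space of $\aff{K}$ I consider the proper $\pi$-face $F_\pi = K \cap \face{\pi x = \pi_0}$. By the induction hypothesis, $F'_\pi$ is defined by finitely many \gc cuts of $F_\pi$. Applying Lemma~\ref{lem:homogeneity} to each of these cuts (and, to secure the inequality $\pi x \leq \pi_0$ itself, a further application with $c = 0$, $\delta = 0$) yields finitely many \gc cuts of $K$ whose common solution set $P_\pi$ satisfies $P_\pi \cap \face{\pi x = \pi_0} = F'_\pi$ (or has empty $\pi$-face if $F'_\pi = \emptyset$). Adding finitely many further \gc cuts I may ensure $P_\pi$ is bounded, i.e., a polytope.

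The main technical point is that a single $P_\pi$ already takes care of an entire neighbourhood of directions around $\pi$. Given a $\pi'$ whose unit vector is close to $\unitvector{\pi}$, Lemma~\ref{lem:nearby-dir} forces every maximizer of $\pi' x$ on $K$ (and hence on $P_\pi \subseteq K$) to lie in a prescribed small neighbourhood of $F_\pi$. A short case analysis then gives the claim: if $F'_\pi = \emptyset$ then the $\pi'$-face of $P_\pi$ is disjoint from $K$, so $P_\pi \cap \face{\pi' x = \pi'_0} = \emptyset = F'_{\pi'}$; if $F'_\pi \neq \emptyset$, then either the $\pi'$-face of $P_\pi$ is contained in $F_{\pi'}$ and so equals $F'_{\pi'}$ by Corollary~\ref{cor:face} applied inside $F_\pi$, or it is not, in which case $F'_{\pi'}$ must already be empty. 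This produces an open cover $\{U_\pi\}$ of the unit sphere of $\aff{K}$ with the property that $P_\pi$ correctly describes $K'$ along every face $F_{\pi'}$ with $\pi' \in U_\pi$.

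Compactness of the unit sphere in $\aff{K}$ then supplies a finite subcover $U_{\pi_1},\dots,U_{\pi_r}$. Setting $Q \coloneqq P_{\pi_1} \cap \dots \cap P_{\pi_r}$ yields a polytope defined by finitely many \gc cuts of $K$, lying inside $K$ and agreeing with $K'$ on $\partial K$. A final invocation of Corollary~\ref{cor:bringIntoRationalSpace} provides additional \gc cuts carving $Q$ down to a polytope $P$ whose affine hull is a rational subspace of $\aff{K}$, as required. The hardest step is the second one above: matching the $\pi'$-face of the lifted polytope $P_\pi$ with $F'_{\pi'}$ for all nearby $\pi'$, since the error tolerances emerging from Lemma~\ref{lem:homogeneity} depend on $\pi$ and the comparison must be made in unit-direction, not in the raw coefficient vector.
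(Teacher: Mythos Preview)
Your proposal follows the paper's strategy closely: build $P_\pi$ via Lemma~\ref{lem:homogeneity}, show it handles a neighbourhood of directions, extract a finite subcover of the unit sphere, and finish with Corollary~\ref{cor:bringIntoRationalSpace}. There is, however, one genuine slip in the neighbourhood step. You write that Lemma~\ref{lem:nearby-dir} controls the maximizers of $\pi' x$ ``on $P_\pi \subseteq K$'', but $P_\pi$ is cut out by only finitely many \gc cuts of $K$ and is \emph{not} a subset of $K$ in general (only the full closure $K'$ is, by Corollary~\ref{cor:face}). So you cannot locate the $\pi'$-face of $P_\pi$ near $F_\pi$ by restricting the conclusion of Lemma~\ref{lem:nearby-dir} from $K$ to $P_\pi$. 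The paper fixes this by using that $P_\pi$ is a \emph{polytope}: the Remark after Lemma~\ref{lem:nearby-dir} then yields directly that for $\pi'$ near $\pi$ the $\pi'$-face of $P_\pi$ is contained in its $\pi$-face $F'_\pi \subseteq F_\pi \subseteq K$, which both shows $P_\pi$ satisfies $\pi' x \le \pi'_0$ and drives the subsequent dichotomy. With this correction your case split in the $F'_\pi \neq \emptyset$ branch matches the paper's; in the $F'_\pi = \emptyset$ branch the correct statement is that $F_{\pi'}$ is disjoint from $P_\pi$ (apply Lemma~\ref{lem:nearby-dir} to $K$ with a neighbourhood $U$ of $F_\pi$ chosen disjoint from the compact set $P_\pi$), not that ``the $\pi'$-face of $P_\pi$ is disjoint from $K$''.
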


Finally, we are ready to prove the main theorem.
\begin{proof}[Proof of Theorem~\ref{thm:cg-closure-polytope}]
The proof proceeds via induction on the dimension of \(K\).
By the induction hypothesis, the Theorem holds
for proper faces of \(K\).
From Lemma~\ref{lem:boundary} we know that
finitely many \gc cuts define a polytope \(P\) in \(K\)
with \(P \cap \partial K = K' \cap \partial K\).
The polytope \(P\) spans
a rational affine subspace \(V\) of \(\aff{K}\).

Let \(D\) denote the orthogonal projection
of \(\Z^{n}\) onto the lineality space \(W\) of \(V\).
As \(W\) is rational, the orthogonal projection \(D\)
is a lattice.
We claim that there are only finitely many \(d \in D\)
with a preimage \(c \in \Z^{n}\),
for which a \gc cut \(c x \leq \ffloor{\delta}\)
cuts out something from \(P\)
i.e., at least one vertex \(v\) of \(P\).

As vertices on the boundary of \(K\) belong to \(K'\),
these cannot be cut out. Therefore \(v\) has to be contained in the relative interior of
\(K\) and so does a small ball \(U\) in \(V\) around \(v\).
Let \(r\) denote the radius of \(U\).
Now whenever \(d \in D\) is too long, i.e., \(\tnorm{d} \geq 1/r\),
we have \(\max_{x \in K} c x \geq \max_{x \in U} cx \geq cv + 1\) as
\(x-v \in W\) and so \(cx-cv = dx - dv\). Hence \(cx \leq \ffloor{\max_{x
  \in K} cx}\) cannot cut off \(v\). As there are only a finite number
of vertices \(v\) of \(P\),
there is a global upper bound on the length of the \(d\)
which could cut out a vertex in the relative interior of \(K\).
As \(D\) is discrete, there are only finitely many such vectors \(d\),
and adding these \gc cuts to \(P\) we obtain \(K'\).

Actually, for every \(d\) we need to add only one cut,
thus defining \(K'\) by finitely many \gc cuts, as claimed.
To prove this,
we consider all the \gc cuts \(c x \leq \ffloor{\delta}\)
where \(c\) is a preimage of a fixed \(d \in D\).
We claim that unless \(P = \emptyset\) 
(when \(P = K'\) and the theorem holds),
there is a deepest cut among these.% , i.e., one
% which cuts out everything from \(V\) that the other cuts do.
This will be the only cut
we need to add to \(P\) for the vector \(d\).

To prove the last claim, let \(x_{0}\) be a rational point of \(V\).
Restricted to \(V\), every \gc cut \(c x \leq \ffloor{\delta}\)
can be rewritten to \(d x \leq \ffloor{\delta} - (c - d) x_{0}\).
As \(\ffloor{\delta}\) is an integer,
\(c - d\) is rational with bounded denominator
(as \(D\) is discrete),
and \(x_{0}\) is rational,
therefore
the right-hand side can take only a discrete set of values,
and \(d x\) is a lower bound on the set of values for every
\(x \in P\).
Therefore there is a cut with \(\ffloor{\delta} - (c - d) x_{0}\)
minimal, which is obviously a deepest cut.
\end{proof}

\section*{Acknowledgements}
\label{sec:acknowledgements}

The authors are grateful to Daniel Dadush and Santanu Dey for
helpful comments and discussions and to the anonymous reviewers whose
comments improved the presentation significantly.

\bibliographystyle{abbrvnat}

\bibliography{bibliography}

\end{document}